\documentclass{article}
\usepackage{graphicx} 
\usepackage{amsfonts}
\usepackage{amsmath}
\usepackage{authblk}
\usepackage{url}

\usepackage{amsmath, amsthm, amssymb}
\newtheorem{theorem}{Theorem}[section]
\newtheorem{lemma}[theorem]{Lemma}
\title{A note on geometric colorings of the Moser lattice}
\author[1]{Ákos Dúcz}

\affil[1]{HUN-REN Alfréd Rényi Institute of Mathematics, Budapest, Hungary}

\date{June 2026}

\begin{document}

\maketitle

\begin{abstract}
    In \cite{matolcsi2025fractionalchromaticnumberplane}, Matolcsi et al. show that the fractional chromatic number of the plane is at least 4. Their proof uses a 27-vertex unit-distance graph in the Moser lattice, with geometric fractional chromatic number exactly 4. We show that this bound is tight for graphs in the Moser lattice by exhibiting geometric 4-colorings of the entire lattice. The same colorings also extend to the entire Moser ring.
\end{abstract}
\section{Introduction}
Let $\omega_1 = 1/2 + i\sqrt{3}/2$ and $\omega_3 = 5/6 + i\sqrt{11}/6$.
The Moser lattice is the additive group defined as $L_{Moser} = \{a + b\omega_1 + c\omega_3 + d\omega_1\omega_3 : a,b,c,d\in\mathbb{Z}\}$.

Let $G$ be a finite unit-distance graph, and let $I(G)$ denote the set of all independent sets of $G$. A fractional coloring of $G$ is then a function $\gamma: I(G) \rightarrow \mathbb{R}$, such that

\begin{enumerate}
    \item $\gamma(S) \geq 0$ for all $S \in I(G)$.
    \item $\sum_{x \in S \in I(G)} \gamma(S) \geq 1$ for any vertex $x\in V(G)$.
\end{enumerate}
The fractional chromatic number $\chi_f(G)$ of $G$ is the least possible value of $\sum_{S\in I(G)}\gamma(S)$ achievable by a fractional coloring of $G$. A coloring is non-fractional (or integral) if $\gamma(S) \in \{0,1\}$ for all $S \in I(G)$.

A geometric fractional coloring (first introduced in \cite{density}) satisfies the additional set of constraints

\[3. \sum_{S \subseteq Z \in I(G)} \gamma(Z) = \sum_{S' \subseteq Z' \in I(G)} \gamma(Z') \]

for any two $S, S'$ which are isometric as subsets of the plane. The geometric fractional chromatic number, or GFCN is defined similarly as above, and denoted by $\chi_{gf}(G)$.
In \cite{matolcsi2025fractionalchromaticnumberplane}, a unit-distance graph $G_{27}$ is presented with $\chi_{gf}(G_{27}) = 4$, and the existence of a non-fractional geometric 4-coloring is shown. We extend this coloring to the entire Moser lattice, showing that any graph within this lattice has such a non-fractional geometric 4-coloring.

This explains the observation in \cite{matolcsi2025fractionalchromaticnumberplane} (Appendix A, Figure 2.) that their graph search improved until it attained the maximum value 4, since the entire graph search was confined to the Moser lattice. However, the fact that this value was attained after finitely many steps is not a logical necessity, and remains to be explained.

\section{The structure of integral geometric colorings}

Let $\gamma$ be a non-fractional coloring and index the set $\{S: S\in I(G), \gamma(S) = 1\}$ by [k]. Let $\pi_\gamma: V(G) \rightarrow [k]$ be the function that maps each vertex of $G$ to their respective colors.

For any two vertices $x,y$ of $G$, let $d(x,y)$ denote their Euclidean distance.

\begin{lemma}
    A non-fractional coloring $\gamma$ is geometric if and only if $d(x,y) = d(z,w) \implies (\pi_\gamma(x) = \pi_\gamma(y) \iff \pi_\gamma(z) = \pi_\gamma(w))$ for all $x,y,z,w \in V(G)$. In other words, $\pi_\gamma(x) = \pi_\gamma(y)$ depends only on the distance $d(x,y)$. 
\end{lemma}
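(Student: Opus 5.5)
The plan is to compute the left-hand side of condition 3 explicitly for an integral coloring, and then reduce condition 3 to a statement about pairs of points. I will read $\pi_\gamma$ as well defined, i.e.\ the independent sets $S$ with $\gamma(S)=1$ partition $V(G)$. This is the implicit assumption behind defining $\pi_\gamma$ as a function.

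\emph{Computing the sum.} For a finite set $S\subseteq V(G)$, the quantity $\sum_{S\subseteq Z\in I(G)}\gamma(Z)$ counts the color classes that contain $S$. Since the classes are disjoint, this count is $1$ if $S$ lies inside a single color class and $0$ otherwise. This holds for nonempty $S$; for $S=\emptyset$ the count is $k$ on both sides, so that case is trivial. A nonempty set lies inside one class exactly when it is monochromatic, i.e.\ $\pi_\gamma(x)=\pi_\gamma(y)$ for all $x,y\in S$. Here transitivity of equality lets us test monochromaticity pairwise. So condition 3 says precisely: for isometric $S,S'$, the set $S$ is monochromatic if and only if $S'$ is.

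\emph{The two directions.}
\begin{itemize}
\item Suppose $\gamma$ is geometric, and let $d(x,y)=d(z,w)$. If $x=y$ then $z=w$, and both pairs are trivially monochromatic. Otherwise $S=\{x,y\}$ and $S'=\{z,w\}$ are isometric, since two-point sets with equal distance are isometric (a suitable rotation, translation or reflection maps one onto the other, in either order). Condition 3 then gives $\pi_\gamma(x)=\pi_\gamma(y)\iff\pi_\gamma(z)=\pi_\gamma(w)$.
\item Conversely, assume the distance condition, and let $f:S\to S'$ be an isometry, which we may take to be a bijection. If $S$ is monochromatic, then for any $z,w\in S'$ the preimages $x=f^{-1}(z)$ and $y=f^{-1}(w)$ satisfy $d(x,y)=d(z,w)$ and $\pi_\gamma(x)=\pi_\gamma(y)$. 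Hence $\pi_\gamma(z)=\pi_\gamma(w)$, so $S'$ is monochromatic. The argument is symmetric, so the two sums in condition 3 agree.
\end{itemize}

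\emph{Main obstacle.} There is no real mathematical obstacle. The main care point is bookkeeping: making explicit that the selected independent sets partition the vertices, so that the counting sum is a $0/1$ indicator of monochromaticity, and handling the degenerate cases $S=\emptyset$ and $x=y$.
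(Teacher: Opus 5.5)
Your proof is correct and follows the same route as the paper: you use two-point sets $\{x,y\}$, $\{z,w\}$ for the forward direction, and for the converse you observe that under the distance condition, isometric sets are either both monochromatic or both not. You are more explicit than the paper about the bookkeeping it leaves implicit, namely that the selected independent sets partition $V(G)$ (so the sum in condition 3 is a $0/1$ indicator of monochromaticity) and the degenerate cases $S=\emptyset$ and $x=y$.
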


\begin{proof}
    Let $\gamma$ be an integral geometric coloring. If there is a distance $d$ and vertices $x,y,z,w$ such that $d(x,y) = d(z,w) = d$, and $\pi_\gamma(x) = \pi_\gamma(y)$ but $\pi_\gamma(z) \not= \pi_\gamma(w)$, then the independent sets $S=\{x, y\}$ and $S'=\{z,w\}$ are clearly isometric, but violate (3). In the other direction, if no such $d$ exists, then it easily follows that any two isometric $S, S' \in I(G)$ are either both monochromatic or both have at least two differently colored vertices. (3) then follows immediately.
\end{proof}

This alternative definition of an integral geometric coloring easily applies to infinite unit-distance graphs, and in particular to lattices (additive subgroups) in the plane.

\begin{lemma}
    Let $L$ be a planar lattice with an integral geometric k-coloring $\gamma$, and let $L_0 = \{ p: p\in L, \pi_\gamma(p) = \pi_\gamma(0) \}$. Then $L_0$ is a sublattice of $L$.
\end{lemma}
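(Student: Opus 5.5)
The plan is to show that $L_0$ is an additive subgroup of $L$ by checking the subgroup criterion directly. The only input is the characterization from the previous lemma: in an integral geometric coloring, whether two points get the same color depends only on the distance between them. As remarked after that lemma, we take this characterization as the definition of an integral geometric coloring of the infinite lattice $L$. The other ingredient is that $L$ is an additive group, so for $x,y\in L$ the points $x-y$ and $0$ lie in $L$ and satisfy $d(x-y,0)=|x-y|=d(x,y)$.

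\emph{Containment of $0$.} Trivially $0\in L_0$, so $L_0$ is nonempty.

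\emph{Closure under differences.} Take $p,q\in L_0$. Since $\pi_\gamma(p)=\pi_\gamma(0)=\pi_\gamma(q)$, the pair $(p,q)$ is monochromatic. The pair $(p-q,0)$ consists of points of $L$ at the same distance $|p-q|$. By the distance characterization, this pair is also monochromatic, i.e. $\pi_\gamma(p-q)=\pi_\gamma(0)$. Hence $p-q\in L_0$.

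\emph{Conclusion.} Nonemptiness together with closure under differences gives, by the standard subgroup criterion, closure under negation ($-q=0-q$) and under addition ($p+q=p-(-q)$). So $L_0$ is an additive subgroup, i.e.\ a sublattice of $L$.

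There is no real obstacle here; the only point requiring care is the one already noted, that the distance-only characterization is what geometricity means for the infinite graph on $L$.
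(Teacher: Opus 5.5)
Your proof is correct and follows the paper's argument: both use $d(p-q,0)=d(p,q)$ together with the distance-only characterization from the previous lemma to conclude $p-q\in L_0$. The paper also checks $-p\in L_0$ separately via $d(0,-p)=d(0,p)$. As you note, this already follows from closure under differences, since $0\in L_0$.
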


\begin{proof}
    For any $p, q \in L_0$, it is immediate that $-p \in L_0$, since $p\in L_0$ depends only on $d(0,p)$. We also have $p-q \in L_0$, since $d(p,q) = d(p-q, 0)$ and $\pi_\gamma(p) = \pi_\gamma(q)$.
\end{proof}

One can also observe that the color classes of $\gamma$ are exactly the cosets of $L_0$.
\newpage
\section{Integral geometric 4-colorings of the Moser lattice}

By a straightforward computation, the graph $G_{27}$ has exactly two non-fractional geometric 4-colorings up to color permutations. These are given by the vertex-colorings

\[(4, 3, 3, 1, 2, 3, 1, 3, 4, 2, 3, 2, 1, 4, 1, 3, 3, 4, 1, 3, 1, 4, 2, 3, 1, 4, 3)\]

and \[(2, 3, 2, 4, 1, 2, 3, 1, 3, 2, 4, 4, 2, 1, 1, 4, 3, 1, 1, 1, 4, 3, 1, 2, 3, 4, 4)\].

Both of these can be generalized to $L_{Moser}$.

\begin{theorem}

    Let $a,b,c,d \in \mathbb{Z}$ denote the coordinates of a vertex $v \in L_{Moser}$. Let $l_1 = a + b + d$ (mod 2) and $l_2 = a + c + d$ (mod 2). Then $\gamma_1$, induced by

    \[\pi_{\gamma_1}(v) = 1 + l_1 + 2l_2\]

    is a geometric 4-coloring of $L_{Moser}$.

\end{theorem}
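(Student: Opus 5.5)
The plan is to reduce everything to a statement about the sublattice $L_0$ (the lattice of vertices with the same color as $0$) and then to a finite congruence computation. First I check that the coordinates $a,b,c,d$ are well defined: $1,\omega_1,\omega_3,\omega_1\omega_3$ are linearly independent over $\mathbb{Q}$, because $\mathbb{Q}(\sqrt{-3},\sqrt{-11})$ has degree $4$. Next, the map $v\mapsto (l_1,l_2)\in(\mathbb{Z}/2)^2$ is a group homomorphism $L_{Moser}\to(\mathbb{Z}/2)^2$. Hence $\pi_{\gamma_1}(x)=\pi_{\gamma_1}(y)$ holds iff $x-y\in L_0:=\{v: l_1(v)=l_2(v)=0\}$. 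Since $d(x,y)=|x-y|$, the criterion of the first Lemma reduces to a single condition. I must show that membership $v\in L_0$ depends only on $|v|^2$. Properness of the coloring then follows from the same statement, once I check that one unit vector lies outside $L_0$: for instance $v=1$ has $(l_1,l_2)=(1,1)$.

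The key step is an explicit formula for $|v|^2$. I write $v=\alpha+\beta\omega_3$ with $\alpha=a+b\omega_1$ and $\beta=c+d\omega_1$. A direct expansion, using $|\omega_3|=1$ and $\mathrm{Re}\,\omega_3=5/6$, $\mathrm{Im}\,\omega_3=\sqrt{11}/6$, gives
\[6|v|^2 = 6(a^2+ab+b^2+c^2+cd+d^2)+10(ac+bd)+5(ad+bc) \;-\; \sqrt{33}\,(ad-bc).\]
Since $\sqrt{33}$ is irrational, $|v|^2$ determines the two integers
\[A=6(a^2+ab+b^2+c^2+cd+d^2)+10(ac+bd)+5(ad+bc),\qquad B=ad-bc.\]
Conversely, it suffices to show that the predicate ``$a+b+d\equiv a+c+d\equiv 0 \pmod 2$'' is a function of the pair $(A,B)$.

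The predicate depends only on $(a,b,c,d)\bmod 2$. So I tabulate the $16$ parity classes and record $B\bmod 2$ together with $A$ modulo a small power of $2$ (I expect $\bmod 4$ to suffice). The goal is to show that the residues for the four classes in $L_0$ never coincide with those for the twelve classes outside $L_0$. For example, $A\equiv 5(ad+bc)\pmod 2$ and $B\equiv ad+bc\pmod 2$, so mod $2$ alone these carry the same information. Therefore I will need $A\bmod 4$, where the term $6(\cdots)\equiv 2(a^2+ab+b^2+c^2+cd+d^2)$ contributes. Note that $A\bmod 4$ is not determined by parities alone, since $10(ac+bd)$ depends on $ac+bd\bmod 2$ while $6(\cdots)$ depends on $(\cdots)\bmod 2$, both of which are parity data. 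So in fact $A\bmod 4$ is a function of parities, and the table is finite.

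The main obstacle is exactly this separation step. If $(A\bmod 4, B\bmod 2)$ does not separate $L_0$ from its complement, I would try $A\bmod 8$ together with $B\bmod 4$ and residues of combinations such as $A\pm B$. The guiding heuristic is to find the correct $2$-adic invariant of the quadratic form on $L_{Moser}$. That is, I would look for an integer-valued combination of $A$ and $B$, e.g. $(A+B)/2$ or a similar expression in the ring of integers of $\mathbb{Q}(\sqrt{33})$, whose residue mod $2$ is exactly $1-[v\in L_0]$ or a similar indicator. Once the separation is verified on the finitely many residue classes, the first Lemma gives that $\gamma_1$ is geometric. It is a proper $4$-coloring since unit vectors have $|v|^2=1$, the same value as $v=1\notin L_0$, so no unit vector lies in $L_0$.
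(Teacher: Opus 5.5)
You reduce the problem exactly as the paper does. Since $v\mapsto(l_1,l_2)$ is a homomorphism, it suffices to show that membership in $L_0$ is a function of $6|v|^2=A-B\sqrt{33}$. Your formula agrees with the paper's: your $A,B$ are its $N,-M$. Your remarks on linear independence and the irrationality of $\sqrt{33}$ are correct and make explicit what the paper uses tacitly.

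The gap is the separation step, which is the heart of the theorem, and the route you commit to for it fails.

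First, $A \bmod 4$ is \emph{not} a function of the parities of $a,b,c,d$. You account for $6(\cdots)$ and $10(ac+bd)$ but drop $5(ad+bc)\equiv ad+bc \pmod 4$, which depends on $ad+bc$ modulo $4$. For instance, $1+\omega_1\omega_3$ and $3+\omega_1\omega_3$ lie in the same parity class but have $A=17$ and $A=75$. So your $16$-entry table is not well defined.

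Second, $(A \bmod 4, B \bmod 2)$ does not separate $L_0$ from its complement. The point $v=0$ has $(A,B)=(0,0)$ and $v=\omega_1+2\omega_3$ has $(A,B)=(40,-2)$, which agree modulo $(4,2)$. Yet the latter has $l_1=1$, so it is not in $L_0$.

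What is missing is the combination in which the non-parity part cancels. Since $5(ad+bc)-(ad-bc)=4ad+6bc$, you get $A-B\equiv 2(a+b+c+d+ab+ac+bc+bd+cd) \pmod 4$. The bracket is congruent modulo $2$ to $k_1+k_2+k_1k_2$ with $k_1=a+b+d$ and $k_2=a+c+d$. This vanishes exactly when $k_1\equiv k_2\equiv 0$. Hence $v\in L_0$ if and only if $A-B\equiv 0 \pmod 4$, which is the paper's identity $(N+M)/2\equiv k_1+k_2+k_1k_2 \pmod 2$.

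The candidate you single out, $(A+B)/2$, has the wrong sign for your convention. It differs from $(A-B)/2$ by $B$, so its parity is not the indicator on its own: $1+\omega_1\omega_3\in L_0$ has $(A+B)/2=9$, while $0\in L_0$ has $0$. It works only together with $B \bmod 2$.

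Your fallback list does contain invariants that would succeed, for instance $A \bmod 8$ with $B \bmod 4$, which determines $A-B \bmod 4$. As written, though, it is a list of things to try, not a verification. Once the congruence above is established, your properness argument via $v=1$ goes through; equivalently, unit vectors have $A-B=6\not\equiv 0\pmod 4$.
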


\begin{proof}
    Let $v$ and $v+z$ be two points in $L_{Moser}$ with $z = A + B\omega_1 + C\omega_3 + D\omega_1\omega_3$. Then $\pi_{\gamma_1}(v) = \pi_{\gamma_1}(v+z)$ if and only if $k_1 = A + B + D \equiv 0$ (mod 2) and $k_2 = A + C + D \equiv 0$ (mod 2).

    Then $|z|^2 = z\overline{z} = \frac{N + M\sqrt{33}}{6}$ where \[
        N = 6(A^2+AB+B^2+C^2+CD+D^2)+10AC+5AD+5BC+10BD
    \]
    and \[
    M = BC-AD.
    \]
    
    A direct calculation mod 2 gives
        $\frac{N+M}{2} \equiv k_1 + k_2 + k_1k_2$ (mod 2). 
        
        We also have
        \[
            k_1\equiv k_2 \equiv0 \,(mod\,2) \iff k_1 + k_2 + k_1k_2 \equiv 0 \,(mod \, 2),
        \]
        
        thus

        \[
            \pi_{\gamma_1}(v) = \pi_{\gamma_1}(v+z) \iff N+M \equiv 0 \, (mod \, 4).
        \]
    But if $|z|^2 = 1$, then $N = 6$ and $M = 0$, so $\pi_{\gamma_1}(v) \not= \pi_{\gamma_1}(v+z)$. Therefore $\gamma_1$ is a proper integral coloring of $L_{Moser}$, and since $\pi_{\gamma_1}(v) = \pi_{\gamma_1}(v+z)$ depends only on $|z|$, it is also geometric by Lemma 2.1.

\end{proof}

The second 4-coloring of $G_{27}$ is generalized by $\gamma_2$ constructed using $l_1 = a+d$ (mod 2) and $l_2 = b + c + d$ (mod 2), and the proof is essentially the same. Interestingly, these are the only two non-fractional geometric 4-colorings of $L_{Moser}$, up to relabeling of the colors. As it turns out, these colorings were already characterized by Tamás Hubai \cite{hubai}, as two of the 22 essentially different 4-colourings of the Moser ring, although their geometricity was not studied.

Both of these also generalize to the entire Moser ring $R_2$, the multiplicative closure of $L_{Moser}$. In fact, using $R_2 = \{\frac{a + b\omega_1 + c\omega_3 + d\omega_1\omega_3}{3^k} : a,b,c,d,k\in\mathbb{Z}\}$, the proof above can be easily adapted by simply ``forgetting'' the exponent $k$. This was noted to the author by Dániel Varga in private discussion.

\begin{theorem}
    $L_{Moser}$ has exactly two integral geometric 4-colorings: $\gamma_1$ and $\gamma_2$, as defined above.
\end{theorem}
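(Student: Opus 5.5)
By Lemma 2.2 and the remark after it, an integral geometric $4$-coloring $\gamma$ of $L_{Moser}$ is determined, up to relabeling, by its zero class $L_0$. This is a sublattice, the color classes are exactly its cosets, and so $L_0$ has index at most $4$ in $L_{Moser}\cong\mathbb{Z}^4$. (If a color is unused the index is smaller; we still count such $\gamma$ as $4$-colorings.) The coloring must also be proper, so $L_0$ contains no vector of norm $1$. By Lemma 2.1 it must be geometric: membership $z\in L_0$ depends only on $|z|$, so $L_0$ is a union of "norm shells" of $L_{Moser}$.

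The plan is therefore to classify all sublattices $L_0\subseteq L_{Moser}$ of index $1,2,3$ or $4$ that satisfy these two conditions.

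\textbf{Reduction to $2L$ or $3L$.} A sublattice of index $m$ contains $mL_{Moser}$.
\begin{itemize}
\item Index $1$ fails because $L_0$ would contain $1$.
\item Index $4$ gives $4L\subseteq L_0$.
\item Cyclic quotients of order $4$, and more generally index-$2$ subgroups, need care.
\end{itemize}
First I show $2L_{Moser}\subseteq L_0$. The vector $2z$ has norm $4|z|^2$, and I want it in $L_0$ for every $z$. For $z$ a unit vector, $2z$ has norm $4$, and so does the non-parallel lattice vector $\omega_1+\omega_3$ or a similar combination. I will look for a norm-$4$ vector $w$ with $w=u+u'$ for units $u,u'$ such that $u$ and $u'$ are forced into the same nonzero class; then $w\in L_0$ and hence $2u\in L_0$. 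Concretely, the rotational structure of the Moser spindle should force this. The plan is to enumerate short vectors, using the norm formula $|z|^2=(N+M\sqrt{33})/6$ from the proof of Theorem 3.1, and check computationally that every admissible $L_0$ contains $2L_{Moser}$ and not merely $4L_{Moser}$. This lets us work modulo $2$.

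\textbf{Index $3$ is excluded.} The Moser spindle contains unit triangles, and in a lattice a unit triangle $0,u,u'$ forces $u,u'$ and $u'-u$ all to lie outside $L_0$. With index $3$ this forces $u'\equiv -u$, so $u'-u\equiv -2u\equiv u\pmod{L_0}$. I expect a short contradiction from the Moser spindle's extra edge of the $\omega_3$-rotated rhombi, with the final check done in $L/3L$.

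\textbf{Main step: subspaces of $\mathbb{F}_2^4$.} Once $2L\subseteq L_0$, we need $L_0/2L$ to be a subspace of $\mathbb{F}_2^4$ of dimension at least $2$, with quotient $\mathbb{F}_2^2$ or smaller, avoiding the images of all unit vectors. The unit vectors of $L_{Moser}$ are the six powers of $\omega_1$ times $1$ and times $\omega_3$ and its conjugate rotations. Their images mod $2$ are finitely many nonzero vectors. Enumerating the $35$ two-dimensional subspaces of $\mathbb{F}_2^4$ and discarding those meeting this unit set should leave only $\ker(l_1,l_2)$ for the two pairs defining $\gamma_1$ and $\gamma_2$. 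Three-dimensional subspaces (index $2$) must be ruled out by the odd cycles in the Moser spindle.

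Finally, geometricity of the two survivors is exactly Theorem 3.1 and its analogue for $\gamma_2$.

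\textbf{Main obstacle.} I expect the hardest step to be the reduction $2L\subseteq L_0$. A priori a cyclic quotient $\mathbb{Z}/4$ is admissible for properness, and it must be killed by the geometric condition, which relates vectors of equal norm that are not otherwise linearly related. I would first try to exhibit explicit pairs of equal-norm vectors, for instance $z$ and its conjugate-type partner with the same $N$ and $M$, that differ by $2\times$(something), and use them to force $\mathbb{Z}/4$ quotients to collapse.
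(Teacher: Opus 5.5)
Your route differs from the paper's. The paper takes the computer classification of the two geometric $4$-colorings of $G_{27}$ as given. It then shows that five equally colored vertices already generate an index-$4$ sublattice (determinant $4$), so $L_0$ is forced. You instead try to classify $L_0$ directly, which can work. However, the step you call the main obstacle, $2L_{Moser}\subseteq L_0$, is left open, and the mechanism you sketch cannot close it:
\begin{itemize}
\item For unit vectors $u,u'$, $|u+u'|^2=2+2\,\mathrm{Re}(u\overline{u'})$ equals $4$ only when $u=u'$. In particular $|\omega_1+\omega_3|^2<4$.
\item If $u\equiv u' \pmod{L_0}$ then $u+u'\equiv 2u$, so deducing $u+u'\in L_0$ is circular.
\end{itemize}
What works is geometricity on the shell $\{2u : |u|=1\}$ together with a unit triangle. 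All these vectors have length $2$, so by Lemma 2.1 either all lie in $L_0$ or none does. If all do, then $2L_{Moser}\subseteq L_0$, because $1,\omega_1,\omega_3,\omega_1\omega_3$ are units. If none does, every unit has order $4$ in $L_{Moser}/L_0$. Then this group is $\mathbb{Z}/4$ and every unit maps to an odd residue. But $\omega_1-1=\omega_1^2$ is a unit whose image is a difference of two odd residues, hence even, a contradiction. Index at most $3$ needs no separate argument, since the Moser spindle lies in $L_{Moser}$ and is not $3$-colorable.

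The second gap is in the main step: your list of units omits the family $\pm\omega_1^k(1+\omega_1)(1-\omega_3)$. This family contains the spindle's tip-to-tip edge $(-1,-1,1,1)$, and each member has $N=6$, $M=0$. Note that $\overline{\omega_3}\notin L_{Moser}$, so \emph{conjugate rotations} do not produce these vectors. Writing $z=\alpha+\beta\omega_3$ with Eisenstein integers $\alpha,\beta$, one finds exactly $18$ unit vectors. Their residues mod $2$ are the nine vectors
\[
(1,0,0,0),\ (0,1,0,0),\ (1,1,0,0),\ (0,0,1,0),\ (0,0,0,1),\ (0,0,1,1),\ (1,1,1,1),\ (1,0,1,0),\ (0,1,0,1).
\]
With only the first six, six planes of $\mathbb{F}_2^4$ survive, e.g.\ $\langle(1,0,1,0),(0,1,0,1)\rangle$, so your enumeration would not single out $\gamma_1,\gamma_2$. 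With all nine, exactly two survive:
\[
\{0,(1,1,1,0),(0,1,1,1),(1,0,0,1)\} \quad\text{and}\quad \{0,(0,1,1,0),(1,0,1,1),(1,1,0,1)\},
\]
which are $L_0/2L_{Moser}$ for $\gamma_1$ and $\gamma_2$. With these two repairs your argument is a complete proof that, unlike the paper's, does not rely on the computer search over $G_{27}$. It also shows that geometricity is needed only to force $2L_{Moser}\subseteq L_0$; properness against the $18$ units does the rest.
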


\begin{proof}
    Let $\gamma$ be an integral geometric 4-coloring of $L_{Moser}$, and let $L_0$ be defined as in Lemma 2.2. Then the restriction of $\gamma$ to the points in $G_{27}$ must equal one of the two colorings given above.

    Let $v_1, v_2, ..., v_{27}$ be the vertices of $G_{27}$. If we identify $L_{Moser} \cong \mathbb{Z}^4$, we may write these as the columns of the following matrix:
\setcounter{MaxMatrixCols}{30}
\[
{
\setlength{\arraycolsep}{3.6pt}
\begin{pmatrix}
1 & 0 & 2 & 2 & 1 & 2 & 1 & 1 & 1 & 0 & 3 & 3 & 1 & 2 & 2 & 1 & 0 & 0 & 0 & 3 & 2 & 3 & 1 & 2 & 1 & 2 & 3\\
4 & 4 & 3 & 3 & 3 & 3 & 4 & 2 & 3 & 4 & 3 & 2 & 3 & 3 & 2 & 3 & 2 & 3 & 2 & 0 & 1 & 1 & 1 & 1 & 2 & 2 & 1\\
2 & 3 & 0 & 1 & 2 & 2 & 2 & 3 & 3 & 3 & 0 & 1 & 1 & 1 & 2 & 2 & 3 & 3 & 4 & 1 & 1 & 1 & 2 & 2 & 2 & 2 & 0\\
0 & 0 & 1 & 1 & 1 & 1 & 1 & 1 & 1 & 1 & 2 & 2 & 2 & 2 & 2 & 2 & 2 & 2 & 2 & 3 & 3 & 3 & 3 & 3 & 3 & 3 & 4
\end{pmatrix}
}
\]

    In the first coloring, the vertices $\{v_1, v_9, v_{14}, v_{18}, v_{22}\}$ are colored the same. Then the difference vectors
    \[
        h_1 = v_9 - v_1 = (0,-1,1,1),
    \]
    \[
        h_2 = v_{14}-v_1 = (1,-1,-1,2),
    \]
    \[
        h_3 = v_{18}-v_1 = (-1,-1,1,2),
    \]    
    \[
        h_4 = v_{22}-v_1 = (2,-3,-1,3)
    \]
    lie inside $L_0$. Let $H = \langle h_1, h_2, h_3, h_4\rangle$. Since
\[
\det\left(
\begin{array}{rrrr}
 0 & -1 &  1 & 1 \\ 
 1 & -1 & -1 & 2 \\ 
-1 & -1 &  1 & 2 \\ 
 2 & -3 & -1 & 3
\end{array}
\right) = 4.
\]
we have $[L_{Moser} : H] = 4$. But since $H\subseteq L_0$ and $[L_{Moser} : L_0] = 4$, we must have $H = L_0$, and by Lemma 2.2 this forces the coloring to be unique on $L_{Moser}.$

The same argument works for the second coloring using the vertex set
\( \{v_1,v_3,v_6,v_{10},v_{24}\} \).
\end{proof}

\section{Acknowledgements}

The author thanks Dániel Varga for his help and support in writing this paper, and for many helpful discussions. The author was supported by grant NKFIH-153165.

\bibliographystyle{alpha}
\bibliography{ref}

\end{document}